\providecommand{\abs}[1]{\left\lvert#1\right\rvert}
\newtheorem{theorem}{Theorem}[section]
\newtheorem{lemma}[theorem]{Lemma}
\begin{document}

\begin{frontmatter}



\title{Boundary crossing Random Walks, clinical trials and multinomial sequential estimation.\tnoteref{1}}
\tnotetext[1]{This work is partially supported by PRIN 2008}

\author{Enrico Bibbona\corref{label2}}
\author{Alessandro Rubba}
\address{Dipartimento di Matematica ``G.Peano''\\
Universit\`a di Torino, Italy\\}
\cortext[label2]{enrico.bibbona@unito.it}

\begin{abstract}
A sufficient condition for the uniqueness of multinomial sequential unbiased estimators is provided generalizing a classical result for binomial samples. Unbiased estimators are applied to infer the parameters of multidimensional or multinomial Random Walks which are observed until they reach a boundary. An application to clinical trials is presented.

\end{abstract}

\begin{keyword}
unbiased estimates\sep sequential multinomial estimation\sep killed Random Walks\sep absorbed Random Walks \sep clinical trials
\MSC[2010] 62L12, 62M05

\end{keyword}

\end{frontmatter}


\section{Introduction} 

In many applications stochastic processes are used to model the behavior of some phenomena up to the first crossing of a threshold. It is the case of neuronal modeling, population dynamics, ruin probabilities... (just to mention a few).
Parametric inference is needed to calibrate such models in order to obtain good fits with experimental data and specific sequential statistical methods are needed (cf. e.g. \cite{susanne}).
In many cases Random Walks (RWs) might be used as toy models for such phenomena.
In the special case where the increments are independent Bernoulli random variables, then a classical result in binomial sequential estimation (cf. \cite{Girshick1946}) may be applied to find an unbiased estimator. In \cite{savage} (updating other references quoted therein) a sufficient condition for the uniqueness of the unbiased estimator is found.
If we have a RW on a higher dimensional lattice or any other RW whose increments have $k$ possible outcomes with probabilities $p_{1}\cdots p_{k}$, a generalization of the above result still applies. Indeed in \cite{multinomial} and \cite{kremers} unbiased sequential estimation is extended to the multinomial context. In such a case a sufficient condition for the uniqueness of the unbiased estimators is not available. The present letter fills this gap and presents a few examples where unbiased estimation is applied to multidimensional or multinomial  boundary crossing RWs.
An application of sequential estimation of the multinomial probabilities that deserve a special attention is that following phase II multistage clinical trials (cf. \cite{multinomial-trials}) where patients are classified according to their respondence to a treatment. A short account of such application concludes the paper. Further relevant results related to the main topic can be found in \cite{bhat-efficient} regarding efficient multinomial sampling plans, in \cite{sinha} for a review of the binomial case and in \cite{sinha2} for generalizations to the quasi-binomial context.

\section{Unbiased multinomial sequential estimation} \label{teoria}

We consider a repeated experiment having $k$ possible outcomes occurring with probabilities $p_{1}\cdots p_{k}$. Denote by $X_{n}=(x_{n}^{1},\cdots,x_{n}^{k})$ the process whose components $x^{i}_{n}\in \mathbb{N}$ count how many occurrences of events of type $i$ we had at the $n$-th (independent) repetition.
The process $X_{n}$ lives in the hyper-plane where the sum of the coordinates is $n$. Denoting by $S_{n}\subset \mathbb{R}^{k}$ the portion of such plane where all the coordinates are positive or null and $S_{n}^{\mathbb{N}}$ the set of points in $S_{n}$ with natural coordinates, for any $n$ we have $X_{n}\in S_{n}^{\mathbb{N}}$.

Let $X_{n}$ be observed until it reaches the boundary $B$ of an accessible region $R\subset\mathbb{N}^{k}$ (we mean those points which are not in $R$ but that might be reached in one step starting from $R$). 

For every point $y\in B$ with coordinates $(y_{1},\cdots, y_{k})$ let us denote by $k(y)$ the number of paths in $R$ that start at the origin and end in $y$ and by $k^{\ast}_{i}(y)$ the number of those that end in $y$ but start in the point whose $i-th$ coordinate is 1 and the others are 0.
The probability that the first hitting to the boundary occurs in $y$ is
\begin{equation}
\mathbb{P}(y)=k(y) p_{1}^{y_{1}}\cdots p_{k}^{y_{k}}.\label{fpt_dens}
\end{equation}
The region $R$ is defined to be \emph{closed} if $\sum_{y\in B} \mathbb{P}(y)=1$. 

\begin{theorem}[\cite{multinomial}]
For any closed region $R$, the ratios
\begin{equation}\hat{p}_{i}(y)=\frac{k_{i}^{\ast}(y)}{k(y)}\label{est}\end{equation}
are unbiased estimators for the probabilities $p_{i}$.\end{theorem}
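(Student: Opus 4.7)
The estimator is a function of the terminal point $Y\in B$, so by definition
\[
\mathbb{E}[\hat p_i(Y)] \;=\; \sum_{y\in B}\frac{k_i^{\ast}(y)}{k(y)}\,\mathbb{P}(y) \;=\; \sum_{y\in B} k_i^{\ast}(y)\, p_1^{y_1}\cdots p_k^{y_k},
\]
where I have used \eqref{fpt_dens} and cancelled $k(y)$. The plan is therefore to identify this remaining sum as exactly $p_i$, via a direct probabilistic interpretation.

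The key observation I would exploit is the decomposition of the hitting event according to the first step of the walk. A path in $R$ of length $|y|=y_1+\cdots+y_k$ that starts at the origin, ends at $y\in B$, and whose first step is in direction $i$ is in bijection with a path in $R$ from the unit vector $e_i$ to $y$; the number of such paths is precisely $k_i^{\ast}(y)$ (with the convention $k_i^{\ast}(e_i)=1$ for the trivial path if $e_i\in B$, and $k_i^{\ast}(y)=0$ if $e_i\notin R\cup B$). Since each such path has probability $p_i\cdot p_1^{y_1}\cdots p_i^{y_i-1}\cdots p_k^{y_k}=p_1^{y_1}\cdots p_k^{y_k}$, I get
\[
\mathbb{P}\bigl(\text{first step in direction }i,\ Y=y\bigr) \;=\; k_i^{\ast}(y)\,p_1^{y_1}\cdots p_k^{y_k}.
\]
Summing over $y\in B$ gives the probability that the first step is in direction $i$ and the walk eventually hits $B$; by the closedness hypothesis $\sum_{y\in B}\mathbb{P}(y)=1$, the walk hits $B$ almost surely, so this sum equals the unconditional probability $\mathbb{P}(\text{first step in direction }i)=p_i$. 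Combining with the first display yields $\mathbb{E}[\hat p_i(Y)]=p_i$.

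The only delicate point is the bookkeeping at the boundary: one must check that the bijection between origin-to-$y$ paths starting with step $i$ and $e_i$-to-$y$ paths is well defined regardless of whether $e_i$ lies in $R$ or already in $B$, and that the degenerate length-zero path is counted correctly when $y=e_i\in B$. Once the convention on $k_i^{\ast}$ is fixed as above, this is just a matter of verification; the combinatorial identity and the probabilistic argument then give the unbiasedness in one line.
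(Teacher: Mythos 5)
Your proof is correct. Note that the paper itself gives no proof of this statement: it is quoted as a known result from the cited reference on multinomial sequential estimation, so there is no in-paper argument to compare against. Your argument is the classical one for this family of results (going back to the binomial case of Girshick, Mosteller and Savage): after cancelling $k(y)$ against the factor $k(y)$ in \eqref{fpt_dens}, the sum $\sum_{y\in B} k_i^{\ast}(y)\,p_1^{y_1}\cdots p_k^{y_k}$ is identified, via the bijection between origin-to-$y$ paths with first step $i$ and $e_i$-to-$y$ paths, as the probability that the first step is in direction $i$ \emph{and} the walk reaches $B$; the closedness hypothesis $\sum_{y\in B}\mathbb{P}(y)=1$ is used exactly where it must be, to drop the second event and conclude that this probability equals $p_i$. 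Your bookkeeping conventions at the boundary are also sound (and in fact the case $e_i\notin R\cup B$ cannot occur, since $e_i$ is reachable in one step from the origin, which lies in $R$), and boundedness of the estimator, $0\leq k_i^{\ast}(y)\leq k(y)$, guarantees absolute convergence of all sums even when $R$ is infinite.
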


A sufficient condition on the region R for the estimator \eqref{est} to be the unique bounded unbiased estimator for the binomial (k=2) probability  is given in \cite{savage}.
We are going to generalize it to the multinomial context.
For any $n$ the region $R\in \mathbb{N}^{k}$ and its boundary $B$  project onto $S_{n}^{\mathbb{N}}$ defining the accessible points \emph{of order $n$}, $R_{n}=R\cap S_{n}^{\mathbb{N}}$, the inaccessible points $S_{n}^{\mathbb{N}}-R_{n}$ and (among them) the boundary points $B_{n}=B\cap S_{n}^{\mathbb{N}}$.  $R$ is said to be a \emph{simple} region if for any $n$ the convex hull $H(R_{n})$ of $R_{n}$ does not contain inaccessible points.

\begin{theorem}
If  the region $R\subset\mathbb{N}^{k}$ is simple and closed, the estimators \eqref{est} are the unique bounded unbiased estimators of the parameters $p_{i}$.\label{teorema}
\end{theorem}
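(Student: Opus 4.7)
The plan is to prove the stronger statement that the family $\{\mathbb{P}(\cdot;p):p\in\Delta\}$, with $\Delta$ the open probability simplex, is complete on bounded functions over $B$. Given two bounded unbiased estimators $T_{1},T_{2}$ of $p_{i}$, their difference $D=T_{1}-T_{2}$ is bounded and satisfies
$$
\sum_{y\in B}D(y)\,k(y)\,p_{1}^{y_{1}}\cdots p_{k}^{y_{k}}=0
\qquad\text{for every }p\in\Delta;
$$
the task is to deduce $D\equiv 0$ on $B$.

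Next I would recast this identity as a polynomial one. Assume first that $B$ is finite (the infinite case needs a truncation argument invoking boundedness of $D$ and closure of $R$), let $N=\max_{y\in B}n(y)$, and multiply each summand by $(\sum_{j}p_{j})^{N-n(y)}$, which equals $1$ on $\Delta$. The resulting degree-$N$ homogeneous polynomial vanishes on $\Delta$, hence by scaling on the entire positive cone, hence identically. Matching coefficients of each monomial $p^{\alpha}$ with $|\alpha|=N$ in the multinomial expansion produces the linear system
$$
\sum_{\substack{y\in B\\ y\le\alpha}}D(y)\,k(y)\binom{N-n(y)}{\alpha-y}=0,\qquad|\alpha|=N,
$$
with $\le$ componentwise and $\binom{\cdot}{\cdot}$ the multinomial coefficient.

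The core of the proof is to solve this system by induction along a carefully chosen total order on $B$. For each $y^{*}\in B$ I would seek $\alpha$ with $|\alpha|=N$, $\alpha\ge y^{*}$, whose companion set $\{y\in B:y\le\alpha\}$ consists of $y^{*}$ together with boundary points already processed. A natural family of candidates is $\alpha=y^{*}+(N-n(y^{*}))e_{j_{0}}$, where $e_{j_{0}}$ points along an axis direction from which $R$ reaches $y^{*}$, i.e.\ $y^{*}-e_{j_{0}}\in R$; such a $j_{0}$ exists by the very definition of $B$ as the one-step boundary of $R$. With this choice $\alpha-y^{*}=(N-n(y^{*}))e_{j_{0}}$ and the associated multinomial coefficient equals $1$; applying the inductive hypothesis collapses the linear relation to $D(y^{*})\,k(y^{*})=0$, so that $D(y^{*})=0$.

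The hardest step is the simultaneous construction of the total order on $B$ and the assignment $y^{*}\mapsto(j_{0},\alpha)$, and this is exactly where simpleness is needed. The concern is that the companion set might contain some $y\in B$, $y\ne y^{*}$, not yet handled, coming from a boundary point whose coordinates sit below those of $y^{*}$ in all directions except $j_{0}$ and exceed $y^{*}_{j_{0}}$ only modestly in that one direction. Simpleness, via the convex-hull condition on $R_{n}$, should constrain the one-step boundary to be thin enough near $y^{*}$ that such accidents can be avoided by a suitable choice of the traversal direction; I would formalize this through a separating-hyperplane argument on $H(R_{n})$ combined with the axis-direction structure of $B$. The only further technical ingredient is the extension to infinite $B$, handled by replacing the polynomial identity with a convergent power-series identity whose coefficient extraction is justified by uniform boundedness of $D$.
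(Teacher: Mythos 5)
Your proposal sets up a legitimately different strategy from the paper --- completeness of the family $\{\mathbb{P}(\cdot\,;p)\}$ via homogenization and coefficient matching, rather than the paper's argument (take the minimal order $m$ at which an unbiased estimator of zero is nonvanishing, use a separating-hyperplane lemma to produce a linear form $L$ with natural coefficients, one of them zero, with $L(\bar y)=b$ at one offending point and $L\ge b+1$ on all other offending points and on $R_m$, then specialize $p_i=p^{m_i}$ and let $p\to 0$ so the single term of order $p^{b}$ cannot be cancelled). But your proposal has a genuine gap precisely at its decisive step. The simultaneous construction of the total order on $B$ and of the assignment $y^{*}\mapsto (j_0,\alpha)$ is not carried out: you say simpleness ``should'' make the companion set $\{y\in B:y\le\alpha\}$ consist of already-processed points and that you ``would formalize'' this via a separating-hyperplane argument. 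That step is not a technicality --- it is the entire content of the theorem, since it is the only place simplicity enters, and for non-simple closed regions the linear system $\sum_{y\le\alpha}D(y)k(y)\binom{N-n(y)}{\alpha-y}=0$ genuinely admits nontrivial bounded solutions. Moreover there is a concrete structural obstruction your sketch does not address: if $y$ and $y'=y+t\,e_{j}$ both lie in $B$ (nothing in the definition of $B$ excludes this, since $y'$ may be reachable without passing through $y$), then $y$ belongs to the companion set of $y'$ for \emph{every} choice of direction (because $y\le y'$ componentwise), while $y'$ belongs to the companion set of $y$ whenever $y$ is forced to use direction $j$; this produces a cycle that no total order can accommodate, and breaking it requires exactly the convex-hull information you never invoke.

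The second gap is the infinite-$B$ case, which your one-sentence remedy does not handle and which is not removable (closed regions with infinite $B$ are the typical case, e.g.\ the random-walk examples in the paper). Vanishing of $\sum_{y}D(y)k(y)p^{y}$ on the simplex does not permit coefficient extraction, however bounded $D$ is: the monomials $p^{y}$ are not linearly independent as functions on the simplex, because of the relation $\sum_{j}p_{j}=1$ (equivalently $p^{y}=\sum_{j}p^{y+e_{j}}$); for instance $-1+p_{1}+\cdots+p_{k}$ vanishes identically on the simplex with nonzero coefficients. This dependence is exactly the mechanism that can produce non-unique unbiased estimators, so any proof must confront it. Your homogenization trick neutralizes it when $B$ is finite (there is a maximal degree $N$ to homogenize to), but it has no analogue in infinite degree, and ``uniform boundedness of $D$'' is irrelevant to the failure. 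The paper's asymptotic argument sidesteps this entirely: the separating functional satisfies $L\ge b+1$ on $R_{m}$ and hence on every boundary point of every later order, so the whole (possibly infinite) tail is dominated by $p^{b+1}$ and the contradiction as $p\to 0$ needs no coefficient identification. To salvage your route you would need both a genuine combinatorial construction of the order from the convex-hull property of $R_{n}$ and a substitute for homogenization in infinite degree; as written, neither is supplied.
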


We adapt the method in \cite{savage}, but we need the following Lemma (obvious when $k=2$) that will be proved after the main theorem.

\begin{lemma}\label{lemma}
Let R be a simple region,  and $n$ an order such that in $S_{n}^{\mathbb{N}}$ there are both accessible and boundary points.
Among any collection of boundary points $C_{n}\subset B_{n}$  it is always possible to choose a $\bar{y}\in C_{n}$ and a $(k-2)$-hyperplane $\pi_{\bar{y}}$ lying in the $(k-1)$-hyperplane that contains $S_{n}$ such that
\begin{enumerate}
\item $\bar{y}\in\pi_{\bar{y}}$
\item $\pi_{\bar{y}}$ is identified by two linear equations
\begin{equation}\begin{cases}
L(x)=m_{1}x_{1}+\cdots +m_{k}x_{k}= b\\
x_{1}+\cdots+ x_{n}= n
\end{cases}\label{sistema}\end{equation}
where $m_{i}\in \mathbb{N}$ one vanishing and at least one non-vanishing and $b\in \mathbb{N}$.
\item on $R_{n}$ we have $L(x)\geq b+1$
\item at any other boundary point $y\in C_{n}$, we have $L(y)\geq b+1$
\end{enumerate}
\end{lemma}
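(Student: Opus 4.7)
The plan is to realize $\bar{y}$ as a vertex of a suitable convex hull and then convert the associated supporting hyperplane into the normalized form required by the statement. The delicate point is locating such a vertex inside $C_{n}$, which is exactly where simplicity of $R$ will be used; everything after is an essentially routine normalization.

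First, I would form $T=R_{n}\cup C_{n}$ and look at its convex hull $H(T)$ inside the $(k-1)$-hyperplane containing $S_{n}$. Because $R$ is simple, $H(R_{n})$ contains no inaccessible points, so $H(R_{n})\cap C_{n}=\emptyset$. The hypothesis guarantees $C_{n}\neq\emptyset$, hence $H(T)\supsetneq H(R_{n})$. Since all vertices of $H(T)$ lie in $T=R_{n}\cup C_{n}$, at least one vertex must belong to $C_{n}$, and I would take $\bar{y}$ to be any one of them.

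Next I would produce a linear functional separating $\bar{y}$ from the rest of $T$. Since $H(T)$ is a rational polytope and $\bar{y}$ is one of its vertices relative to the ambient $(k-1)$-hyperplane $x_{1}+\cdots+x_{k}=n$, the normal cone at $\bar{y}$ inside that ambient hyperplane is rational polyhedral and full-dimensional. Picking a rational interior direction of this cone and clearing denominators produces integers $m_{1}',\ldots,m_{k}'$ such that $L'(x)=m_{1}'x_{1}+\cdots+m_{k}'x_{k}$ satisfies $L'(\bar{y})<L'(y)$ strictly for every $y\in T\setminus\{\bar{y}\}$; any such $L'$ is non-constant on $S_{n}$, so $(m_{1}',\ldots,m_{k}')$ is not a scalar multiple of $(1,\ldots,1)$.

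To finish I would normalize: set $c=\min_{i}m_{i}'$, $m_{i}=m_{i}'-c$, and $L(x)=\sum_{i}m_{i}x_{i}$. On $S_{n}$ the functionals $L$ and $L'$ differ by the constant $-cn$, so strict separation is preserved; moreover the $m_{i}$ are nonnegative integers, at least one vanishes by the choice of $c$, and at least one is positive because $L'$ is non-constant on $S_{n}$. Taking $b=L(\bar{y})\in\mathbb{N}$ and using integrality of $L$ on lattice points upgrades the strict inequality to $L(y)\geq b+1$ for every $y\in T\setminus\{\bar{y}\}$, which yields conditions (3) and (4) of the statement; conditions (1) and (2) are immediate from the construction.
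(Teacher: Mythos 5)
Your proof is correct, but it reaches the conclusion by a genuinely different route than the paper. The paper applies the separating hyperplane theorem only to $H(R_{n})$ and a single boundary point, which secures conditions 1--3 after clearing denominators; condition 4 then has to be repaired in a second stage: the authors pass to the plane $L'(x)=c$ where $c$ is the minimum of $L'$ over $C_{n}$, single out a point of $C_{n}\cap\pi_{c}$ by a coordinatewise (lexicographic) maximization, and perturb the functional by $-\frac{1}{r}x_{1}-\cdots-\frac{1}{r^{k}}x_{k}$ with $r$ large so that the selected point becomes the unique minimizer, re-clearing denominators at the end. You avoid this entire second stage by making the right choice of $\bar{y}$ at the outset: since $R$ is simple, $H(R_{n})$ contains no inaccessible (hence no boundary) points, so the lattice polytope $H(R_{n}\cup C_{n})$ must have a vertex lying in $C_{n}$, and a vertex automatically admits a rational linear functional strictly minimized there over all of $R_{n}\cup C_{n}$; conditions 3 and 4 are thus obtained in one stroke rather than sequentially. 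Your final normalization --- subtracting the minimal coefficient, using $x_{1}+\cdots+x_{k}=n$ to absorb the resulting constant, and invoking integrality of $L$ on lattice points to upgrade strict inequality to $L\geq b+1$ --- is essentially the same device the paper uses (adding multiples of the second equation) to obtain natural coefficients with one of them vanishing. What your route buys is the elimination of the tie-breaking algorithm and the $1/r$-perturbation, which is the most delicate part of the original argument (the paper's perturbation weights $r^{-j}$ decrease in $j$ while its selection algorithm prioritizes the last coordinate, a bookkeeping subtlety your argument never has to confront); what the paper's route buys is that it needs nothing beyond the separating hyperplane theorem and density of $\mathbb{Q}$ in $\mathbb{R}$, whereas you invoke (elementary, but slightly heavier) facts about vertices and normal cones of polytopes with integer vertices.
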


\begin{proof}[Proof of Theorem \ref{teorema}]
If the theorem were false we would have another unbiased estimator $\hat{U}$ of $p_{i}$   and the difference $\Delta=\hat{p}_{i}-\hat{U}$ would be a non-identically vanishing unbiased estimate of zero. Since the first boundary point $y$ hit by the process is a sufficient statistics (cf. \cite{ferguson}, Section 7.3, Lemma 1), we restrict to those estimators that are function of it and $\mathbb{E}(\Delta)=\sum_{y\in B} \Delta(y) \mathbb{P}(y)=0$. Let $m$ be the smallest integer such that $\Delta$ is not vanishing at one element of $B_m$.
If $R_{m}=\emptyset$  for such $m$ then the region $R$ is finite and the thesis follows from Theorem 4 in \cite{kremers}.
If instead $S_{m}^{\mathbb{N}}$ contains accessible points  we apply Lemma \ref{lemma} to the collection $C_{m}$ of boundary points $y\in B_m$ such that $\Delta(y)\neq 0$ and find a point $\bar{y}$ and a linear combination $L(x)=m_{2}x_{2}+\cdots+m_{k}x_{k}$ (for notational convenience we stipulate that the vanishing coefficient is the first one) with $m_{i}\in \mathbb{N}$ such that $L(\bar{y})=b$ and  that for any $z\in C_{m}\cup R_{m}$ we have $L(z)\geq b+1$. A fortiori $L(y)\geq b+1$ at any $y$ in any $B_{n}$ with $n>m$ since any such a $y$ may only be reached evolving from an $x \in R_{m}$. 
\noindent For some positive $\Delta^{\ast}$ we have
\begin{equation}\abs{\Delta(\bar{y})}\,k(\bar{y})\, p_{1}^{\bar{y}_1}\cdots p_{k}^{\bar{y}_{k}}= \abs{\sum_{y:\big\{\substack{L(y)\geq b+1\\ \Delta(y)\neq 0}} \hspace{-4mm}\Delta(y)\,\mathbb{P}(y)}\leq \Delta^{\ast} \hspace{-4mm} \sum_{y:\big\{\substack{L(y)\geq b+1\\ \Delta(y)\neq 0}} \hspace{-4mm}\mathbb{P}(y).\label{ineq}\end{equation}

We are going to show that there are values of the parameters at which such inequality cannot hold.
By construction any path from the origin to an $y\in B$ such that $\Delta(y)\neq0$ and $L(y)\geq b+1$ either ends in $C_{m}$ or crosses $R_{m}$. In $R_{m}\cup C_{m}$ we have a finite number  $F$ of points $z^{1}\cdots z^{F}$ and there $L(z^{i})\geq b+1$. For any $y\in B$ such that $\Delta(y)\neq0$ and $L(y)\geq b+1$ we have 
\begin{equation}\mathbb{P}(y)=  \mathbb{P}(y\,|R_{m}\cup C_{m})\mathbb{P}(R_{m}\cup C_{m})=\mathbb{P}(y\,|R_{m}\cup C_{m})\sum_{s=1}^{F}k(z^{s}) p_{1}^{z_{1}^{s}}\cdots p_{k}^{z_{k}^{s}}\label{Pa}\end{equation}

Let us now choose the parameters $p_{2}\cdots p_{k}$ in such a way that for some common factor $0<p<1$ we have $p_{i}=p^{m_{i}}$ for any $i=2\cdots k$. We get 
\[\mathbb{P}(y)\leq\mathbb{P}(y\,|R_{m}\cup C_{m})\;p^{b+1}\sum_{s=1}^{F}k(z^{s}) \]
and inequality \eqref{ineq} becomes
 \[p^{b}\abs{\Delta(\bar{y})}\,k(\bar{y})\, p_{1}^{\bar{y}_{1}}\leq \Delta^{\ast}p^{b+1} \hspace{-1mm}\sum_{s=1}^{F}k(z^{s})\cdot \hspace{-7mm}\sum_{\quad y:\big\{\substack{L(y)\geq b+1\\ \Delta(y)\neq 0}} \hspace{-4mm}\mathbb{P}(y|R_{m}\cup C_m)\leq \Delta^{\ast}p^{b+1} \hspace{-1mm} \sum_{s=1}^{F}k(z^{s})\]
that is always violated when $p$ is small enough.
\end{proof}

\begin{proof}[Proof of Lemma \ref{lemma}]
Existence of an $y'$  and  of a $\pi_{y'}$ satisfying conditions 1. and 3. with rational coefficients in \eqref{sistema} is ensured by the Separating Hyperplane theorem (cf. \cite{ferguson}, Sec. 2.7) and the density of $\mathbb{Q}$ in $\mathbb{R}$.
To get natural coefficients in \eqref{sistema} it is then sufficient to multiply the first equation by a suitable integer and to add to it the second equation a sufficient number of times. Let us denote by $L'(x)=b'$ the new equation of $\pi_{y'}$ meeting the first three conditions. Condition 4. may still not be fulfilled by $\pi_{y'}$. Let us denote by $c\leq b'$ the minimum value taken by $L'$ on $C_{n}$ and let us consider the plane $\pi_{c}$ with first equation $L'(x)=c$. If it intersects $C_{n}$ in one and only one point we have found both the point and the plane satisfying condition 4. If $C_{n}\cap\pi_{c}$ contains more than one point, let us select one with the following algorithm. Start with the last coordinate $x_{n}$ and select the points in $C_{n}\cap\pi_{c}$ where $x_{k}$ is largest. Among them choose those at which $x_{k-1}$ is largest and continue until the choice of the largest $j$-th coordinate singles out one and only one point $\bar{y}$ of $C_{n}\cap\pi_{c}$.
Now consider the plane $\pi_{\bar{y},r}$ with first equation
\begin{equation} L_{r}(x)=L'(x)-\frac{1}{r}x_{1}-\frac{1}{r^{2}}x_{2}-\cdots \frac{1}{r^{k}}x_{k} = c - \frac{1}{r}\bar{y}_{1}-\frac{1}{r^{2}}\bar{y}_{2}-\cdots- \frac{1}{r^{k}}\bar{y}_{k}=b_{r}.\label{www}\end{equation}

Of course $\pi_{\bar{y},r}$ still passes through $\bar{y}$, and equation \eqref{www}, once multiplied by $r^{k}$, has integer coefficients. Moreover, since $R_{n}$ is finite and since $L(x)-b>0$ for any $x\in R_{n}$, we can take $r$ large enough to ensure both that $L_{r}(x)-b_{r}>0$ for every $x\in R_{n}$ and that the coefficients are natural. The same argument applies to the points in $C_{n}-\pi_{c}$.
Moreover for any $y\in C_{n}\cap \pi_{c}$ we have
\[L_{r}(y)-b_{r}= \frac{1}{r}(\bar{y}_{1}-y_{1})+\frac{1}{r^{2}}(\bar{y}_{2}-y_{2})+\cdots +\frac{1}{r^{k}}(\bar{y}_{k}-y_{k})\]
which is certainly positive due to the algorithm we used to select $\bar{y}$.
\end{proof}

\section{Examples}\label{RW}
In the following examples we derive the unbiased estimators for some multidimensional or multinomial RWs observed up to the crossing of a boundary. 
\subsection{RWs on a bidimensional lattice}
Let $W_{i}$ be a RW on $\mathbb{Z}^{2}$ such that $W_{0}=0$ and $W_{i}=W_{i-1}+ I_{i}$ where the increments $I_{i}$ take the values (0,1), (1,0),(0,-1) and (-1,0) with probabilities $p_{1}, p_{2}, p_{3}$ and $1-\sum_{i=1}^{3}p_{i}$. Let $W_{i}$ be observed up to the first time its second component equals $b>0$.
The process $X_{n}=(x_{n}^{1},\cdots,x_{n}^{4})$ whose components $x^{i}_{n}$ count how many occurrences of increments of type $i$ we had at the $n$-th step of the RW is of the kind described in Section \ref{teoria} and it is observed until it hits $B=\{x\in\mathbb{N}^{4}: x_{1}-x_{3}=b\}$. The accessible region is closed whenever $p_{1}\geq p_{3}>0$ and simple. 
The maximum likelihood (ML) estimators of the $p_{i}$ are $X^{i}_{N}/N$, while the unique unbiased estimators \eqref{est} are 
\[ \hat{p}_{1}= \frac{b-1}{b}\cdot\frac{X^{1}_{N}}{N-1},  \quad \hat{p}_{2}=  \frac{X^{2}_{N}}{N-1}, \quad \hat{p}_{3}=  \frac{b+1}{b}\cdot\frac{X_{N}^{3}}{N-1}\]
The trajectory count is based on the reflection principle (cf. \cite{feller}).

\begin{table}
\begin{center}
\vspace{-5mm}
\begin{tabular}{ccccccc}
&\multicolumn{3}{c}{ML estimators}  &\multicolumn{3}{c}{Unbiased estimators}\\
&mean&sd&m.s.e.&mean&std&m.s.e.\\\hline
$p_{1}=0.4$&0.436&0.081&0.0078&0.400&0.080&0.0063\\
$p_{2}=0.15$&0.148&0.045&0.0020&0.150&0.046&0.0020\\
$p_{3}=0.3$&0.268&0.078&0.007&0.200&0.087&0.008\\\hline
$p_{1}=0.7$&0.727&0.123&0.016&0.701&0.130&0.017\\
$p_{2}=0.1$&0.095&0.072&0.005&0.101&0.077&0.006\\
$p_{3}=0.1$&0.084&0.085&0.007&0.098&0.098&0.010\\
\end{tabular}
\end{center}
\vspace{-4mm}
\caption{Results of inference on a simulated sample of RWs on a bidimensional lattice stopped as soon as their second component reaches the threshold value $b=10$.\label{bidimensionale}}
\end{table}

The results of a simulation study performed on 10.000 paths are shown in Table \ref{bidimensionale}. The performances of the two methods are not much different and the best choice depends on the parameter range. When $p_{1}$ is close to $p_{3}$ some of the unbiased estimators have a smaller mean square error than the corresponding ML, while when $p_{1}$ is higher ML estimates are better.
Let us remark that the estimates of parameters $p_{2}$ and $p_{4}$, in the direction on which the RW is not constrained, are estimated much better than the other two. 

\subsection{A simple RW allowing for null steps}

Let $W_{i}$ be a RW on $\mathbb{Z}$ such that $W_{0}=0$ and $W_{i}=W_{i-1}+ I_{i}$ where the increments $I_{i}$ are 1, 0 or -1 with probabilities $p_{1}, p_{2}$ and $1-\sum_{i=1}^{2}p_{i}$.
Still we count the increments by $X_{n}=(x_{n}^{1},\cdots,x_{n}^{3})$.
$W_{i}$ is observed up to the first time it equals $b>0$ and $X_{i}$ until $X_{1}-X_{3}=b$.
The accessible region is simple and whenever $p_{1}\geq p_{3}>0$ also closed. ML estimators are again the sample proportions, and the unbiased ones are

\[ \hat{p}_{1}= \frac{b-1}{b}\cdot\frac{X^{1}_{N}}{N-1}  \qquad \hat{p}_{2}=  \frac{X^{2}_{N}}{N-1}.\]

\section{Sequential multinomial estimation and clinical trials}
In a multinomial multistage phase II cancer trial (cf. \cite{multinomial-trials}) a group of patients is treated with a new drug and then classified as \emph{responders} if tumor shrinkage is more than 50\%, \emph{non-responders} if it is less and \emph{early progressions} if they undergo a progress in the disease. A decision is taken whether to stop the trial and conclude that the therapy is \emph{promising} (or \emph{ineffective}) if the responders are more (less) than a predetermined value and the early progressions are less (more) than another value. In the intermediate case when the number of respondent patients or of the early progressions is between the thresholds a new group of patients is enrolled and the trial continue to a next stage.
Estimation of the probability of response and early progressions after such trials matters in practice. In the case of a binomial trial (patients are either responders or non-responders) the presence of a bias from ML was already noticed in \cite{JungTrials} and  unbiased estimators were studied. The design proposed in \cite{multinomial-trials} was the following: let $K$ be the maximum number of stages allowed and $n_{s}$ for $s =1 \cdots K$ the number of patients enrolled in each stage. We denote by $N_{s}=\sum_{i\leq s}n_{i}$ the number of patients involved up to the $s$-th stage. The process $X_{j}=(r_{j},j-r_{j}-e_{j}, e_{j})$ counts the number of respondent, non-respondent and early progressions among the first $j$ patients. For any $j\neq N_{s}$ the trial is continued, but when $j=N_{s}$ for some $s<K$ there are three options:
\begin{enumerate}
\item the trial is stopped and the therapy considered \emph{promising} if $r_{N_{s}}\geq \rho^{P}_{s}$ and $e_{N_{s}}\leq \epsilon^{P}_{s}$ and such stopping region is denoted by $B_{N_{s}}^{P}$
\item the trial is stopped and the therapy considered \emph{ineffective} if $r_{N_{s}}\leq \rho^{I}_{s}$ and $e_{N_{s}}\geq \epsilon^{I}_{s}$ and such stopping region is denoted by $B_{N_{s}}^{I}$

\item the trial is continued to stage $s+1$ if $\rho^{I}_{s} \leq r_{N_{s}}\leq \rho^{P}_{s}$ or $\epsilon^{P}_{s}\leq e_{N_{s}}\leq \epsilon^{I}_{s}$ and such continuation region is denoted by $R_{N_{s}}$.
\end{enumerate}

The trial  ends at a random stage $S\leq K$ with a final observation $X_{N_{S}}=(r,N_{S}-r-e,e)$. The probabilities of response and of an early progression can be estimated by means of the unbiased estimators \eqref{est} that are
\begin{align*}\hat{p}_{1}(r,N_{S}-r-e,e)&=\frac{\sum_{R_{N_{1}}}\hspace{-1.6mm}\sum_{R_{N_{2}}}\hspace{-2.6mm}\cdots\hspace{-1mm}\sum_{R_{N_{S-1}}}\hspace{-1.6mm}\binom{n_{1}-1}{r_{N_{1}}-1,y_{1},e_{N_{1}}}\binom{n_{2}}{r_{N_{2}},y_{2},e_{N_{2}}}\cdots\binom{n_{S}}{r_{N_{S}},y_{S},e_{N_{S}}}}{\sum_{R_{N_{1}}}\hspace{-1.6mm}\sum_{R_{N_{2}}}\hspace{-2.6mm}\cdots\hspace{-1mm}\sum_{R_{N_{S-1}}}\hspace{-1.6mm}\binom{n_{1}}{r_{N_{1}},y_{1},e_{N_{1}}}\binom{n_{2}}{r_{N_{2}},y_{2},e_{N_{2}}}\cdots\binom{n_{S}}{r_{N_{S}},y_{S},e_{N_{S}}}}\\
\hat{p}_{3}(r,N_{S}-r-e,e)&=\frac{\sum_{R_{N_{1}}}\hspace{-1.6mm}\sum_{R_{N_{2}}}\hspace{-2.6mm}\cdots\hspace{-1mm}\sum_{R_{N_{S-1}}}\hspace{-1.6mm}\binom{n_{1}-1}{r_{N_{1}},y_{1},e_{N_{1}-1}}\binom{n_{2}}{r_{N_{2}},y_{2},e_{N_{2}}}\cdots\binom{n_{S}}{r_{N_{S}},y_{S},e_{N_{S}}}}{\sum_{R_{N_{1}}}\hspace{-1.6mm}\sum_{R_{N_{2}}}\hspace{-2.6mm}\cdots\hspace{-1mm}\sum_{R_{N_{S-1}}}\hspace{-1.6mm}\binom{n_{1}}{r_{N_{1}},y_{1},e_{N_{1}}}\binom{n_{2}}{r_{N_{2}},y_{2},e_{N_{2}}}\cdots\binom{n_{S}}{r_{N_{S}},y_{S},e_{N_{S}}}}
\end{align*}
where $\binom{n}{r,y,e}$ denotes the multinomial coefficient $\frac{n!}{r! y!e!}$ and the sums are performed over the triples $(r_{N_{i}},y_{i},e_{N_{i}})$ belonging to the continuation regions $R_{N_{i}}$ with $i<S$.

\section{Conclusion}
The main result of the paper is to prove that simplicity of the accessible region $R$ is a sufficient condition to ensure the uniqueness of the unbiased estimators \eqref{est}. Of course the availability (and the uniqueness) of unbiased estimators does not mean that they are the best way to estimate the parameters and the simulation study performed on RWs in Sec.\ref{RW} shows that there are both parameter ranges where the unbiased estimators are superior than ML and vice-versa. The bias of the ML estimators, moreover, can be reduced as in \cite{whitehead} or by bootstrapping and the best method to be used needs to be decided case by case. Multinomial clinical trials provide an important application of the method presented.


\bibliographystyle{elsarticle-harv}
\bibliography{BLSS.bib}







\end{document}